\numberwithin{equation}{section}
\newtheorem{theorem}{Theorem}[section]
\newtheorem{corollary}[theorem]{Corollary}
\newtheorem{lemma}[theorem]{Lemma}
\theoremstyle{definition}
\newtheorem{example}[theorem]{Example}
\newtheorem{definition}[theorem]{Definition}
\newcommand{\lc}[1]{\underline{#1}}
\newcommand{\uc}[1]{\overline{#1}}
\DeclareMathOperator{\Rack}{Rack}
\DeclareMathOperator{\Frame}{Frame}
\newcommand{\Modvi}{\ensuremath{\mathrm{MV}}}
\newcommand{\Mod}{\ensuremath{\mathrm{M}}}
\begin{document}


\title{Simplifying modular lattices by removing \\ doubly irreducible elements}
\author{Jukka Kohonen}
\address{Department of Mathematics and Systems Analysis\\
  Aalto University\\
  Espoo, Finland\\
  {\tt jukka.kohonen@iki.fi}}


\maketitle

\begin{abstract}Lattices are simplified by removing some of their
  doubly irreducible elements, resulting in smaller lattices called
  racks.  All vertically indecomposable modular racks of $n \le 40$
  elements are listed, and the numbers of all modular lattices of
  $n \le 40$ elements are obtained by P\'olya counting.  SageMath code
  is provided that allows easy access both to the listed racks, and to
  the modular lattices that were not listed.  More than 3000-fold
  savings in storage space are demonstrated.
\end{abstract}

\section{Introduction}
\label{sec:intro}

One way to begin studies of a~combinatorial family is to list all its
members.  Such listings can be quite large.  A~full listing of
unlabeled vertically indecomposable modular lattices of $n$~elements
(here denoted by~$\Modvi_n$) for all $n \le 30$ contains more than
828~million lattices, and measures over~two gigabytes in a~highly
compressed form~\cite{kohonen2019generating,eudat}.  Extending it to
bigger~$n$ would be impractical.

Another approach to a large family is by structural theorems, such as
Herrmann's theorem \cite[Hauptsatz]{herrmann1973} that represents
every modular lattice as an $S$-glued sum of its maximal complemented
intervals (see also \cite[Theorem 304]{gratzer2011}).

For practical computation, it is often useful to combine both
approaches.  A~challenge is then to find structural theorems that
balance two competing needs: being powerful enough to offer
significant computational advantages, while remaining simple enough
for efficient implementation.

Here we consider a~structural simplification that allows us to
represent the vast majority of $\Modvi_n$ as derivatives of a~smaller
set of lattices that we call \emph{racks}.  They are similar to
Gr\"atzer and Quackenbush's \emph{frames} of planar modular
lattices~\cite{gratzer2010}, but without the restriction to planarity.
For a~motivating example, consider the nonplanar modular lattices in
Figure~\ref{fig:first}.  The lattice on the left can be derived from
the one on the right by adding six doubly irreducible elements.  By
varying their placement we can obtain $64$ nonisomorphic modular
lattices, and by varying the number of added elements, we obtain many
more.

\begin{figure}[t]
  \begin{center}
    \includegraphics[width=0.35\textwidth]{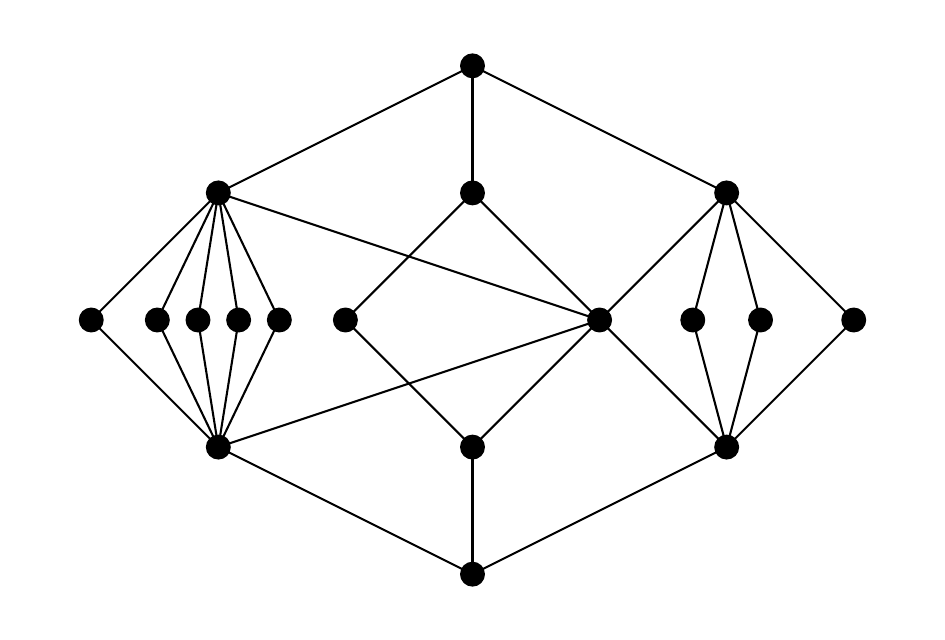}
    \hspace{0.1\textwidth}
    \includegraphics[width=0.35\textwidth]{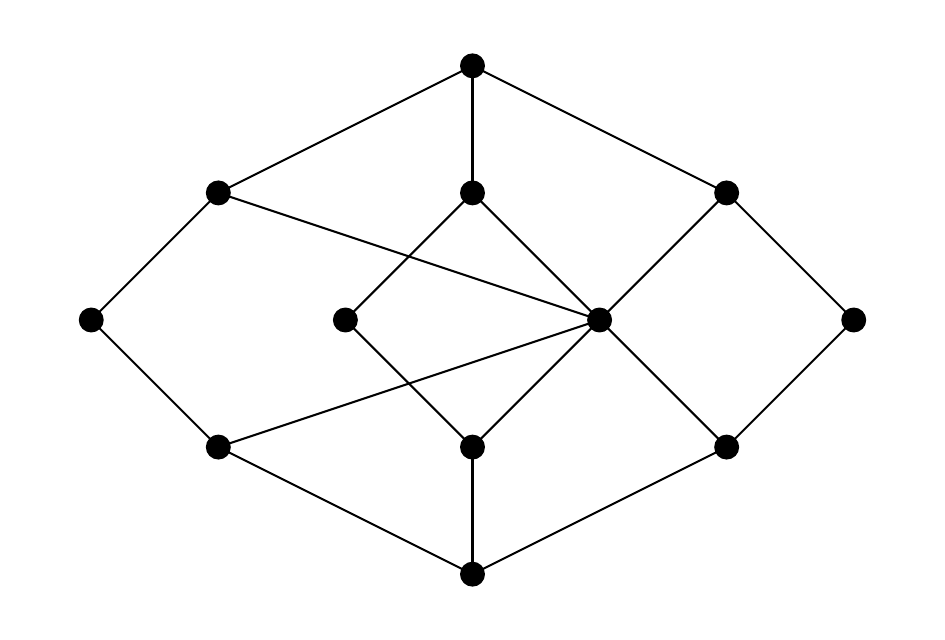}
  \end{center}
  \caption{A modular lattice and its rack.}
  \label{fig:first}
\end{figure}

In our approach only the racks are generated and stored.  Other
lattices in $\Modvi_n$ can then be P\'olya-counted, or generated
at~will by~adding doubly irreducible elements.  To~demonstrate the
viability of this approach, a~listing of all vertically indecomposable
modular racks for $n \le 40$ has been published~\cite{fairdata}.
It~contains about $1.5$ billion lattices and takes $5.7$~gigabytes to
store.  In~contrast, an~explicit listing of $\Modvi_n$ for $n \le 40$
would contain $5.2$ \emph{trillion} lattices and take about
$20$~terabytes.  Our storage savings are thus more than $3000$-fold.
It~would also take something like $60$ cpu-core-years to produce the
explicit listing.

Supplementary SageMath code~\cite{sagecode} provides easy access to
all modular lattices of $n \le 40$ elements (both vertically
decomposable and indecomposable).  The~access is through a virtual
listing whose members can be accessed sequentially, by ordinal index,
or uniformly at random.  The accessed lattices are created on demand.

Another motivation for the work is that smaller collections are more
meaningful for humans to study, and the removal of doubly irreducible
elements may help in concentrating on other structural properties of
the lattices.

\section{Definitions and basic results}
\label{sec:ears}

All our lattices are finite and nonempty.  We write $\prec$ for the
cover relation, $\uc{x}$ and $\lc{x}$ for the sets of upper and lower
covers of~$x$, and $\lvert S \rvert$ for set cardinality.  A~lattice
element~$x$ is \emph{doubly irreducible} if
$\lvert\uc{x}\rvert = \lvert\lc{x}\rvert = 1$.  An~\emph{unlabeled
  lattice} is an isomorphism class of lattices.

A lattice is \emph{vertically decomposable} if it contains
a~\emph{knot}, that is, an element distinct from top and bottom, and
comparable with every element.  Otherwise it is \emph{vertically
  indecomposable}, or briefly \emph{vi}.  Throughout this work we
focus in vi-lattices, since composing them into vertically
decomposable lattices is straightforward.

Unrestricted addition or removal of doubly irreducible elements could
severely affect the structure of a~lattice.  For example, adding such
an element between the~bottom and a~coatom in a~Boolean lattice $B_3$
yields a~nonmodular lattice.  To~keep things in check, we consider
addition and removal only at specific locations, called
\emph{decoration sites}.

For concreteness the following definitions are stated in terms of
labeled lattices, though our main interest is in unlabeled lattices.
We assume that the elements are equipped with an intrinsic linear
order (for example, they could be integers).  However, the only role
of this intrinsic order is that from a number of similarly-placed
doubly irreducible elements, we can choose a specific subset of a
given size: the ones that have the highest labels.  This ensures a
well-defined operation for computation, though for unlabeled lattices
the specific choice of elements is inconsequential.

\begin{definition}
  A \emph{decoration site} is a~pair $(a,b)$ of lattice elements with
  $\uc{a} = \lc{b}$ and $\lvert\uc{a}\rvert \ge 2$.  We say that $a$
  is its \emph{lower corner} and $b$ is its \emph{upper corner}.
  \label{def:site}
\end{definition}

\begin{definition}
  The \emph{trinkets} of a decoration site $(a,b)$ are the
  $\min(d, \; \lvert\uc{a}\rvert\!-\!2)$ highest-labeled doubly
  irreducible elements between $a$ and $b$, where $d$ is the number of
  doubly irreducible elements there.  A~decoration site without
  trinkets is \emph{empty}.
  \label{def:trinket}
\end{definition}

\begin{definition}
  A \emph{rack} is a lattice that contains no trinkets.  If~$L$ is a
  lattice, $\Rack L$ is the sublattice obtained by removing its
  trinkets.
  \label{def:rack}
\end{definition}

Put another way, $\Rack L$ is obtained by removing from each
decoration site as~many double irreducible elements as possible, while
leaving at least two (possibly doubly irreducible) elements between
the corners.  The rationale for leaving two elements is that we want
to retain the overall structure of the lattice.

\begin{example}
  Let $M_k$ denote a modular lattice that has $k$~atoms and
  length~$2$.  Then $M_k$ has $k-2$ trinkets, and
  $\Rack M_k \cong M_2$.
\end{example}

\begin{example}
  Both lattices in Figure~\ref{fig:first} contain five decoration
  sites.  On~the left, one site has $4$ trinkets, one has $2$
  trinkets, and three are empty.  On~the right, all sites are empty.
\end{example}

It is easy to see that racks of isomorphic lattices are isomorphic.
Thus we can define the rack of an unlabeled lattice as
$\Rack [L] = [\Rack L]$, where $[\;\cdot\;]$ denotes ``the isomorphism
class of''.  The lower corner of a~decoration site can be the upper
corner of another site, but otherwise decoration sites are disjoint:
no two sites can have the same lower corner, or the same upper corner;
and trinkets of one site cannot be corners or trinkets of another
site.

Now it should be emphasized that the trinket-adding operation
considered here is not novel as such.  Indeed it is a special case of
\emph{one-point extension}~\cite[\S 1.1]{gratzer2011}.  Also,
Gr\"atzer and Quackenbush~\cite{gratzer2010} define a~similar
operation when $L$ is a~planar modular lattice with a~given planar
diagram: From each interval $[a,b]$ isomorphic to some $M_k$, consider
the $k$ elements between $a$ and $b$ in the order that they appear in
the planar diagram.  Keep the first and the last, and remove the other
$k-2$ \emph{internal} elements, which are by construction doubly
irreducible.  The result is a~planar distributive lattice called
$\Frame L$.  In the planar case we have $\Frame L \cong \Rack L$, but
our construction is more general since $L$~need not be planar, and
$\Rack L$ need not be distributive.  We prefer the name \emph{rack}
because ``frame'' and ``modular frame'' are overloaded with other
meanings.

Next we observe some structural properties that are preserved upon
trinket addition and removal.

\begin{lemma}
  $L$ and $\Rack L$ have the same decoration sites, and $\Rack L$
  is a rack.
  \label{lemma:racksites}
\end{lemma}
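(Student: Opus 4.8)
The plan is to deduce both assertions from a single structural fact: removing trinkets never creates a new cover relation. Two easy preliminaries set this up. First, a corner always has at least two covers on one side (the lower corner $a$ has $|\uc{a}|\ge 2$, and the upper corner $b$ has $|\lc{b}|=|\uc{a}|\ge 2$), so no corner is doubly irreducible and hence no corner is a trinket; in particular every corner survives in $\Rack L$. Second, no two decoration sites of one lattice share a lower corner (nor, dually, an upper corner): if $(a,b)$ and $(a,b')$ were both sites, then $b$ and $b'$ would have the identical lower-cover set $\uc{a}$ of size at least two, and any two distinct members $m_1,m_2$ of that set force $b=m_1\vee m_2=b'$.

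Write $T$ for the set of all trinkets of $L$. The central lemma is that, for surviving elements $x,y$, one has $x\prec y$ in $\Rack L$ if and only if $x\prec y$ in $L$. The direction from $L$ to $\Rack L$ is immediate, since deleting elements cannot insert anything strictly between $x$ and $y$. For the converse, suppose $x\prec y$ in $\Rack L$ but not in $L$. Then the open interval $(x,y)$ taken in $L$ is nonempty and, containing no survivor, lies entirely in $T$. Any $t$ in it has unique lower cover $x$ and unique upper cover $y$, because a strictly intermediate corner would itself have to be a trinket, which corners never are; hence $(x,y)$ is precisely the decoration site of $t$, and its whole middle layer $\uc{x}$ consists of trinkets, contradicting that a site retains at least two of its middle elements. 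As a consequence, for every survivor $x$ the upper covers of $x$ in $\Rack L$ are exactly those of $x$ in $L$ that avoid $T$, and dually for lower covers; in particular a surviving element is doubly irreducible in $\Rack L$ exactly when it is so in $L$ (if a trinket covered $x$ from above, then $x$ would be the lower corner of that trinket's site, whose surviving middle layer still has at least two elements, so $x$ would keep at least two upper covers).

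From here both assertions follow. For the equality of decoration sites, the forward inclusion is direct: corners survive, the surviving middle layer still has at least two elements, and both $\uc{a}$ and $\lc{b}$ lose exactly the same trinkets, so $\uc{a}=\lc{b}$ persists. For the reverse inclusion, given a site $(a,b)$ of $\Rack L$, choose two distinct surviving middle elements $m_1,m_2$; in $L$ they satisfy $m_1\vee m_2=b$ and $m_1\wedge m_2=a$, and this pins down any stray cover, since a trinket above $a$ has $a$ as its lower corner and, sharing the covers $m_1,m_2$, must have $b=m_1\vee m_2$ as its upper corner, hence lies below $b$; this gives $\uc{a}=\lc{b}$ in $L$, with the dual argument yielding $\lc{b}\subseteq\uc{a}$. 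Finally, to see that $\Rack L$ is a rack, use that its sites are those of $L$ and compare, on each site, the number $d$ of doubly irreducible middle elements with $k-2$: if $d\le k-2$ all of them are deleted and, by the preservation of double irreducibility, $d'=0$; if $d\ge k-1$ then exactly two middle elements survive, so $k'=2$. In either case $\min(d',k'-2)=0$, so $\Rack L$ has no trinkets.

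The hard part will be the ``no new covers'' lemma and its two uses, in the reverse inclusion and in preserving double irreducibility: these are exactly the places where one must rule out that deletions silently merge elements or corners, and they lean on the two preliminaries together with the identities $m_1\vee m_2=b$ and $m_1\wedge m_2=a$.
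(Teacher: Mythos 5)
Your proof is correct, and it follows the same overall decomposition as the paper's: corners are never trinkets and hence survive; the cover sets of the corners lose exactly the trinkets, so sites of $L$ remain sites of $\Rack L$; no new sites appear; and the retained middle elements prevent any trinkets from remaining. The genuine difference is one of rigor rather than strategy. The paper's proof silently relies on the fact that deleting trinkets creates no new cover relations: this is hidden inside its equation $\uc{a}_R = \uc{a}_L - T$ and inside the one-line claim that a non-site of $L$ cannot become a site of $R$. You isolate this as an explicit ``no new covers'' lemma and prove it (minimal deleted elements in a gap $(x,y)$ would force $(x,y)$ to be a decoration site whose entire middle layer was deleted, contradicting the retention of two elements), which is exactly the right justification and is the most substantive part of your write-up. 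You also handle the reverse inclusion directly—pinning down the corners of a site of $R$ inside $L$ via $m_1\vee m_2=b$ and $m_1\wedge m_2=a$—where the paper argues contrapositively and tersely, and you make explicit the case analysis ($d\le k-2$ versus $d\ge k-1$, using preservation of double irreducibility for survivors) behind the paper's one-sentence conclusion that $\Rack L$ is a rack. In short: same skeleton, but your version supplies the supporting facts that the paper treats as evident, and it would survive scrutiny at points where the paper's argument is only a sketch.
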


\begin{proof}
  Let $R = \Rack L$, and let $T = L - R$.  First we show that
  all decoration sites of~$L$ are still present in~$R$.  Let $(a,b)$
  be a~decoration site in~$L$.  Since
  $\lvert\uc{a}\rvert = \lvert\lc{b}\rvert \ge 2$, the elements $a$
  and $b$ are not trinkets, so they are also present in $R$.  Also in
  $R$ the upper covers of $a$ are the same as the lower covers of $b$,
  because
  \[
    \uc{a}_R = \uc{a}_L - T = \lc{b}_L - T = \lc{b}_R.
  \]
  Here subscripts indicate the ambient lattice, so $\uc{a}_R$ means
  the upper covers of~$a$ in~$R$.  By~construction we also have
  $\lvert\uc{a}_R\rvert \ge 2$.  Thus $(a,b)$ is a~decoration site
  in~$R$.

  Next we prove that removing all trinkets from $L$ does not create
  any new decoration sites.  If some elements $(a,b)$ of $L$ are not
  a~decoration site, it is either because $\uc{a}_L \ne \lc{b}_L$, or
  because $\lvert\uc{a}_L\rvert < 2$.  In either case, removing some
  trinkets does not make $(a,b)$ a decoration site in~$R$.

  Because $\Rack L$ has the same decoration sites as~$L$, and their
  trinkets have been removed, it follows that $\Rack L$ has no
  trinkets, and is indeed a~rack.
\end{proof}

From Lemma~\ref{lemma:racksites} it follows that
$\Rack(\Rack L) = \Rack L$, or in other words, $\Rack$ is
an~idempotent operation.  This is convenient for our computations:
every modular lattice can be reduced into a~rack in a~single step of
removing all trinkets.  In the opposite direction, any modular lattice
can be created from its rack by \emph{decorating} with some trinkets.
The result of such decoration is, up to the naming of the trinkets,
uniquely determined by two things: the rack itself, and the numbers of
trinkets added to each decoration site.

\begin{lemma}
  $\Rack L$ is vertically decomposable if and only if $L$ is
  vertically decomposable.
  \label{lemma:virule}
\end{lemma}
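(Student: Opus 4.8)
The plan is to connect the knots of $L$ with those of $R = \Rack L$ by means of two elementary observations, and then to transport a knot across the $\Rack$ operation in each direction. The first observation is that $\Rack$ preserves the extreme elements: the bottom of $L$ has no lower cover and the top has no upper cover, so neither is doubly irreducible, hence neither is a trinket; thus $L$ and $R$ have the same top and bottom. The second observation is that \emph{no trinket can be a knot}. If $t$ is a trinket of a site $(a,b)$, then $a \prec t \prec b$, so $t \in \uc{a}$; since $\lvert\uc{a}\rvert \ge 2$ there is some $y \in \uc{a}$ with $y \ne t$, and $y$ is incomparable with $t$ because both cover $a$ (neither can lie below the other). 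Hence $t$ fails to be comparable with every element.

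For the direction ``$L$ vertically decomposable $\Rightarrow$ $\Rack L$ vertically decomposable'', I would take a knot $x$ of $L$. By the second observation $x$ is not a trinket, so it survives in $R$. Since $R$ is a subposet of $L$, all comparabilities of $x$ are inherited, so $x$ is comparable with every element of $R$; and $x$ remains distinct from the shared top and bottom. Thus $x$ is a knot of $R$.

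For the converse I would take a knot $x$ of $R$ and show it is also a knot of $L$. It is already comparable with every element of $R$ and distinct from the (shared) top and bottom, so it remains only to check that $x$ is comparable in $L$ with every trinket $t$. Writing the site of $t$ as $(a,b)$, so that $a \prec_L t \prec_L b$, I would use that the corners $a,b$ lie in $R$ and that $x$ is comparable with both: if $x \le a$ then $x < t$, while if $x \ge b$ then $x > t$. The hard part will be excluding the remaining case $a < x < b$. Here I would invoke the defining property of a rack, namely that each site retains at least two elements between its corners; these surviving elements lie in $\uc{a}_R = \lc{b}_R$ and are pairwise incomparable, so if $x$ were strictly between $a$ and $b$ it would be one of them and hence incomparable with a sibling, contradicting that $x$ is a knot of $R$. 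With this case ruled out, $x$ is comparable with $t$ in every case, so $x$ is a knot of $L$. This last case analysis is the crux of the argument: it is precisely where the rack's guarantee of at least two surviving elements per site is essential, since without it a knot of $R$ could fail to remain a knot once the trinkets are restored.
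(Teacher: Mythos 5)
Your proposal is correct and takes essentially the same approach as the paper's proof: a knot of $L$ is never a trinket, hence survives into $\Rack L$, and conversely a knot $x$ of $\Rack L$ is checked against each trinket through the corners $(a,b)$ of its site, with the middle case $a < x < b$ excluded because the site retains at least two pairwise incomparable elements between its corners. If anything, your explicit handling of that middle case spells out a step the paper compresses into the phrase ``either $y \prec b \preceq x$ or $x \preceq a \prec y$''.
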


\begin{proof}
  For the ``if'' direction, let $L$ contain a knot~$x$.  If
  $a \prec x \prec b$ in~$L$, then $\lvert\uc{a}\rvert = 1$, thus $x$
  is not a~trinket.  Then $x$ is not removed, and it is a knot in
  $\Rack L$ as~well.

  For the ``only if'' direction, let $\Rack L$ contain a knot~$x$.  If
  $u \prec x \prec v$ in~$\Rack L$, then $(u,v)$ is not a~decoration
  site because $\lvert\uc{u}\rvert = 1$.  Thus in $L$ there are no
  other elements between $u$ and $v$ than $x$~itself.  To~see that
  $x$~is a~knot in $L$ as~well, we observe that every $y \in L$ is
  either also in $\Rack L$, thus comparable to $x$; or a trinket of
  a~decoration site $(a,b)$, with either $y \prec b \preceq x$ or
  $x \preceq a \prec y$.  In either case, $y$ is comparable to $x$
  in~$L$.  It follows that $x$ is comparable to all elements of~$L$,
  and $L$~is vertically decomposable.
\end{proof}

\begin{lemma}
  If the dual of $L$ is $L^\delta$, then the dual of $\Rack L$
  is $\Rack (L^\delta)$.
  \label{lemma:dual}
\end{lemma}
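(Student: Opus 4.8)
The plan is to use that duality is an order-reversing bijection on one fixed underlying set, that it interchanges upper and lower covers while leaving the intrinsic labels untouched, and then to check that it carries decoration sites to decoration sites and trinkets to trinkets. Once the two sets of removed elements are seen to coincide, the claim follows because deleting a fixed set of elements commutes with reversing the order.

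First I would record the basic dual identities. Writing $\uc{x}_L, \lc{x}_L$ for covers in $L$ and $\uc{x}_{L^\delta}, \lc{x}_{L^\delta}$ for covers in $L^\delta$, duality gives $\uc{x}_{L^\delta} = \lc{x}_L$ and $\lc{x}_{L^\delta} = \uc{x}_L$ for every $x$, and $u \prec v$ in $L^\delta$ exactly when $v \prec u$ in $L$. Two consequences are immediate and will be used repeatedly: an element is doubly irreducible in $L$ if and only if it is doubly irreducible in $L^\delta$, since double irreducibility depends only on the sizes $\lvert\uc{x}\rvert$ and $\lvert\lc{x}\rvert$, which merely swap; and the set of elements lying strictly between two comparable elements is the same in both lattices.

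Next I would establish the site correspondence via the involution $(a,b) \mapsto (b,a)$. For a decoration site $(a,b)$ of $L$ I would verify that $(b,a)$ is a decoration site of $L^\delta$ with the identical middle set, using the chain
\[
  \uc{b}_{L^\delta} = \lc{b}_L = \uc{a}_L = \lc{a}_{L^\delta},
\]
where the central equality is the defining property of $(a,b)$ and the outer two are the dual identities, together with $\lvert\uc{b}_{L^\delta}\rvert = \lvert\uc{a}_L\rvert \ge 2$. Since this map is clearly self-inverse, it is a bijection between the decoration sites of $L$ and of $L^\delta$, and corresponding sites have exactly the same elements strictly between their corners.

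It then remains to match trinkets, and here no real difficulty should arise. The candidate elements — the doubly irreducible elements between the corners — coincide for $(a,b)$ and $(b,a)$ by the two observations above, so the count $d$ agrees, as does the bound $\lvert\uc{a}_L\rvert - 2 = \lvert\uc{b}_{L^\delta}\rvert - 2$; hence the same number $\min(d,\;\lvert\uc{a}_L\rvert - 2)$ of trinkets is selected at each corresponding site. The only point demanding any care is the ``highest-labeled'' selection, and it is settled by the paper's convention that the intrinsic linear order on elements is independent of the lattice order: dualizing reverses the lattice order but not the labels, so the very same elements are chosen (and in any case the choice is inconsequential for the unlabeled lattices to which the lemma is applied). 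Summing over all sites, $L$ and $L^\delta$ have one and the same trinket set $T$. Thus $\Rack L = L - T$ and $\Rack(L^\delta) = L^\delta - T$, and since restricting the reversed order to $L - T$ is the same as reversing the restricted order, $(\Rack L)^\delta = \Rack(L^\delta)$, as required.
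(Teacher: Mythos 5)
Your proof is correct and follows essentially the same route as the paper's: the involution $(a,b)\mapsto(b,a)$ on decoration sites, the observation that $L$ and $L^\delta$ therefore have the same trinkets, and the remark that removing a common set of elements commutes with order reversal. The paper states all this in three terse sentences; you have simply filled in the details, including the point about the intrinsic labels being unaffected by dualization, which the paper leaves implicit.
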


\begin{proof}
  The decoration sites of $L^\delta$ are exactly the pairs $(b,a)$
  such that $(a,b)$ is a~decoration site of $L$, so $L$ and $L^\delta$
  have the same trinkets.  Thus $\Rack L$ and $\Rack (L^\delta)$ have
  the same elements.  The duality of their order is clear.
\end{proof}

\begin{lemma}
  $\Rack L$ is semimodular if and only if $L$ is semimodular.
  \label{lemma:semimod}
\end{lemma}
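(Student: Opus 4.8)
The plan is to reduce the statement to the effect of adjoining or deleting a \emph{single} trinket and then to induct. I will work with the covering form of the property: a finite lattice is semimodular when, for all $u,v$, if $u$ and $v$ both cover $u \wedge v$, then $u \vee v$ covers both $u$ and $v$. Lower semimodularity is the dual condition, so by Lemma~\ref{lemma:dual} it suffices to treat the (upper) semimodular case. Since $L$ is built from $R = \Rack L$ by adding trinkets one at a time to decoration sites that already contain at least two elements between their corners, and each such step leaves the site a valid decoration site, it is enough to prove: if $L^+$ arises from a lattice $L$ by adjoining one trinket $t$ to a decoration site $(a,b)$, then $L^+$ is semimodular if and only if $L$ is.

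Before the two directions I would record three facts about such a $t$, where $a \prec t \prec b$. First, $L$ is a sublattice of $L^+$ with the same meets and joins, since the join of two elements other than $t$ can never equal $t$ (it would force both of them $\le a$) and dually for meets. Second --- and this is where keeping at least two elements per site is used --- the covering relation of $L$ is exactly the restriction of that of $L^+$: deleting $t$ does not make $a \prec b$ a cover, because another element of the site still lies strictly between $a$ and $b$. Third, the trinket meets and joins the rest of the lattice only through its corners,
\[
  t \wedge w = a \wedge w \ \text{ unless } w \ge b, \qquad
  t \vee w = b \vee w \ \text{ unless } w \le a,
\]
since $a$ and $b$ are the unique covers of $t$. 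With these, the direction ``$L^+$ semimodular $\Rightarrow$ $L$ semimodular'' is immediate: if $u,v \in L$ both cover $u \wedge v$, the same relations hold in $L^+$ by the first two facts, so $u \vee v$ covers both there, hence also in $L$.

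The substantial direction is ``$L$ semimodular $\Rightarrow$ $L^+$ semimodular''. Take $u,v \in L^+$ both covering $m = u \wedge v$ and put $j = u \vee v$; I must show $j$ covers $u$ and $v$. If neither $u$ nor $v$ is $t$, then $m$ and $j$ differ from $t$ as well, all four elements lie in $L$, and the conclusion follows from semimodularity of $L$ with the first two facts. The remaining case, say $u = t$, is the crux. Then $m \prec t$ forces $m = a$, so $v$ covers $a$; since $a \prec v$ gives $v \not\le a$, the join formula yields $j = b \vee v$. I would then argue that semimodularity of $L$ confines $v$ to the interval $[a,b]$: choosing any second element $c$ of the site (one exists, as the site has at least two), one gets $c \wedge v = a$, so semimodularity of $L$ makes $c \vee v = b \vee v$ cover $c$; as $c \prec b \le b\vee v$, a strict inequality $b < b\vee v$ would contradict this cover, forcing $v \le b$. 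Hence $v$ is an element of the site, $j = b$, and $b$ covers both $t$ and $v$.

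The main obstacle is precisely this confinement step: showing that an element $v$ which covers the lower corner $a$ and is joined with the trinket must already lie inside $[a,b]$. This is where the hypothesis is genuinely used --- without semimodularity of the smaller lattice $L$ one could have $v$ covering $a$ with $b \vee v$ strictly above $b$, and then the newly added trinket would violate the covering condition. Using the symmetric Birkhoff form (both elements covering the meet) rather than a one-sided form is what makes the argument clean here, since it is the hypothesis $a \prec v$, and not merely $a \le v$, that lets the comparison with a second element of the site close the case.
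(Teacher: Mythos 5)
Your overall strategy---reduce to adjoining a single trinket, show $L$ sits inside $L^+$ as a sublattice with the same meets, joins and covering relation, then verify the covering condition case by case---is sound, and your three preliminary facts and the direction ``$L^+$ semimodular $\Rightarrow$ $L$ semimodular'' are fine. But the step you yourself call the crux has a genuine gap: the equality $c \vee v = b \vee v$ is asserted, not proved. Semimodularity of $L$ gives you that $c \vee v$ covers $c$ and $v$, but nothing you have established identifies $c \vee v$ with $b \vee v$; the only way to get that identification is to already know $v \le b$, which is exactly what you are trying to prove, so the argument is circular. Moreover, semimodularity alone genuinely cannot deliver the confinement: in the Boolean lattice $B_3$, take $a$ the bottom, $c_1, c_2, v$ the atoms and $b = c_1 \vee c_2$; then $v \succ a$, the lattice is semimodular (even distributive), $c_1 \wedge v = a$, yet $v \not\le b$ and $c_1 \vee v \ne b \vee v$. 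So your closing paragraph, claiming that semimodularity of $L$ is ``where the hypothesis is genuinely used'' to confine $v$, misdiagnoses where the argument actually comes from.

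What closes the case is the definition of a decoration site, which you never invoke: Definition~\ref{def:site} requires $\uc{a} = \lc{b}$ \emph{as sets}. Hence $v \succ a$ in $L$ immediately gives $v \in \uc{a} = \lc{b}$, so $v \prec b$, and then $j = b \vee v = b$ covers both $t$ and $v$---no auxiliary element $c$ and no semimodularity of $L$ are needed in this case. (The $B_3$ configuration above is precisely what the condition $\uc{a} = \lc{b}$ excludes; this is why adding a doubly irreducible element between the bottom and a coatom of $B_3$ destroys modularity, as noted in Section~\ref{sec:ears}.) With that one-line replacement your proof becomes correct, and it is then essentially an inductive, one-trinket-at-a-time rendering of the paper's own argument, which checks Birkhoff's condition directly on $L$ and $\Rack L$ and handles the ``some cover is a trinket'' case by exactly this appeal to $\uc{a} = \lc{b}$.
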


\begin{proof}
  We use Birkhoff's condition~\cite[Theorem 375]{gratzer2011}.  Let
  $R=\Rack L$.

  For the ``if'' direction, let $L$ be semimodular, and let
  $x,y,a \in R$ be distinct elements such that $x,y \succ a$.  Then
  also in $L$ we have $x,y \succ a$, so there exists $b \in L$ such
  that $b \succ x,y$.  Clearly $b$ is not a~trinket, so $b \in R$ and
  $b \succ x,y$ in~$R$.  Thus $R$ is semimodular.
  
  For the ``only if'' direction, let $R$ be semimodular, and let
  $x,y,a$ be distinct elements of $L$ such that $x,y \succ a$.  There
  are two cases: (1)~If $x,y \in R$, then by semimodularity there
  exists $b \in R$ such that $b \succ x,y$.  Then $b \succ x,y$ also
  in~$L$. (2)~If~$x$~or $y$ is a~trinket in $L$, then it belongs to a
  decoration site whose lower corner is~$a$.  Let $b$ be the upper
  corner of that site.  Since $\uc{a}=\lc{b}$, it follows that
  $b \succ x,y$.  In both cases Birkhoff's condition is satisfied, so
  $L$ is semimodular.
\end{proof}
  
\begin{theorem}
  $\Rack L$ is modular if and only if $L$ is modular.
  \label{thm:mod}
\end{theorem}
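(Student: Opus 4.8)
The plan is to reduce modularity to semimodularity, which we have already controlled in Lemma~\ref{lemma:semimod}, and to bridge the gap by dualizing via Lemma~\ref{lemma:dual}. The key ingredient is the classical fact that a lattice of finite length is modular if and only if it is both semimodular and dually semimodular. Since all our lattices are finite, this characterization applies. Moreover, dual semimodularity of a lattice $K$ means precisely that the dual $K^\delta$ is semimodular.

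First I would record the equivalence
\[
  L \text{ is modular} \iff L \text{ is semimodular and } L^\delta \text{ is semimodular},
\]
which is immediate from the cited characterization together with the identification of dual semimodularity of $L$ with semimodularity of $L^\delta$.

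Next I would invoke Lemma~\ref{lemma:semimod} twice. Applied to $L$ itself, it gives that $L$ is semimodular if and only if $\Rack L$ is semimodular. Applied to the dual lattice $L^\delta$, it gives that $L^\delta$ is semimodular if and only if $\Rack(L^\delta)$ is semimodular. At this point I would bring in Lemma~\ref{lemma:dual}, which identifies $\Rack(L^\delta)$ with $(\Rack L)^\delta$; hence $L^\delta$ is semimodular if and only if $(\Rack L)^\delta$ is semimodular, that is, if and only if $\Rack L$ is dually semimodular.

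Finally I would chain the equivalences together:
\[
  L \text{ is modular}
  \iff \Rack L \text{ is semimodular and } (\Rack L)^\delta \text{ is semimodular}
  \iff \Rack L \text{ is modular},
\]
where the last step again applies the semimodular-plus-dually-semimodular characterization, now to $\Rack L$. This yields the theorem. I do not expect a genuine obstacle: the substantive work already resides in Lemmas~\ref{lemma:semimod} and~\ref{lemma:dual}, and the only points requiring care are to invoke the finite-length characterization of modularity correctly and to keep the dualization bookkeeping straight when combining the two lemmas.
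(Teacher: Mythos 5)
Your proposal is correct and is exactly the argument the paper intends: its one-line proof (``Apply Lemmas~\ref{lemma:dual} and~\ref{lemma:semimod} to the duals of $\Rack L$ and $L$'') is precisely your chain of equivalences, resting implicitly on the same classical fact that a finite lattice is modular if and only if it is semimodular and dually semimodular. You have simply spelled out the dualization bookkeeping that the paper leaves to the reader.
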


\begin{proof}
  Apply Lemmas~\ref{lemma:dual} and~\ref{lemma:semimod} to the duals
  of $\Rack L$ and $L$.
\end{proof}

\begin{theorem}
  Every distributive lattice is a rack.
  \label{thm:dist}
\end{theorem}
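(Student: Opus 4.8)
The plan is to show that a distributive lattice contains no trinkets, which by Definition~\ref{def:rack} makes it a rack. The first observation is that a trinket can occur only at a decoration site $(a,b)$ whose trinket count $\min(d,\,\lvert\uc{a}\rvert-2)$ is at least one. Since this count vanishes whenever $\lvert\uc{a}\rvert = 2$, a trinket requires a decoration site with $\lvert\uc{a}\rvert \ge 3$. So it suffices to prove that a distributive lattice has no decoration site with three or more upper covers at its lower corner.

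Next I would rule out such a site by exhibiting a forbidden sublattice. Suppose $(a,b)$ is a decoration site with distinct $c_1, c_2, c_3 \in \uc{a} = \lc{b}$. I would verify that $\{a, c_1, c_2, c_3, b\}$ is a sublattice isomorphic to the diamond $M_3$. The $c_i$ are pairwise incomparable, because each both covers $a$ and is covered by $b$, so $c_i < c_j$ would force an element strictly between $a$ and $c_j$. Then for $i \ne j$ the meet $c_i \wedge c_j$ lies strictly below $c_i$ and above $a$, hence equals $a$ since $a \prec c_i$; dually $c_i \vee c_j = b$. This is the routine verification, amounting to the classical fact that an interval $[a,b] \cong M_k$ with $k \ge 3$ embeds $M_3$.

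Finally, since $M_3$ is not distributive, any distributive lattice containing such a decoration site would contain a non-distributive sublattice, a contradiction. Hence every decoration site of a distributive lattice has exactly two upper covers at its lower corner, so its trinket count is $\min(d,0) = 0$ and no trinkets exist; the lattice is therefore a rack. I do not anticipate any serious obstacle. The only point requiring care is to connect the sublattice argument with the combinatorial trinket count, making sure the harmless case $\lvert\uc{a}\rvert = 2$ is disposed of by the $\min$ in Definition~\ref{def:trinket} rather than by distributivity.
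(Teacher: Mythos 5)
Your proof is correct and follows essentially the same route as the paper: both deduce that a trinket forces a decoration site with at least three elements in $\uc{a}=\lc{b}$, exhibit the resulting diamond $M_3$ as a sublattice, and conclude non-distributivity from the standard forbidden-sublattice characterization. Your write-up merely spells out the diamond verification and the role of the $\min$ in Definition~\ref{def:trinket}, which the paper leaves implicit.
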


\begin{proof}
  Suppose that $L$ is a~lattice that is not a~rack.  Then it contains
  a decoration site $(a,b)$ with a~trinket $t$, and $\uc{a}=\lc{b}$
  contains at least three elements $t,u,v$.  Thus the elements
  $a,t,u,v,b$ are a~diamond (a~sublattice isomorphic to $M_3$).
  By~\cite[Theorem~102]{gratzer2011}, $L$~is not distributive.
\end{proof}

\begin{theorem}[Gr\"atzer and Quackenbush~\cite{gratzer2010}]
  If~$L$ is a planar modular lattice, then $\Rack L$ is planar and
  distributive.
  \label{thm:planar}
\end{theorem}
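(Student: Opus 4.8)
The plan is to treat the two conclusions separately, exploiting that $\Rack L$ is obtained from $L$ by deleting trinkets, each of which is doubly irreducible and hence a degree-two vertex (one lower cover, one upper cover) of the Hasse diagram. Since $L$ is modular, Theorem~\ref{thm:mod} already gives that $\Rack L$ is modular; in particular $\Rack L$ contains no $N_5$, so by the $M_3$--$N_5$ characterisation of distributivity it remains only to show that $\Rack L$ contains no $M_3$, and that $\Rack L$ is planar.

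For planarity I would fix a planar upward diagram of $L$ and delete the trinkets one at a time. Deleting a trinket $t$ with $a \prec t \prec b$ merely erases a degree-two vertex together with its two incident edges, which cannot introduce a crossing or spoil the upward orientation. The one thing to check is that no new cover relation is forced: the only pair that could acquire a new cover by this deletion is $(a,b)$, but since a decoration site retains at least two elements between its corners we still have $a \not\prec b$, so no new edge must be drawn. Iterating over all trinkets therefore yields a valid planar diagram of $\Rack L$, so $\Rack L$ is planar.

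For distributivity I argue by contradiction, supposing $\Rack L$ contains an $M_3$. Because $\Rack L$ is modular, a standard property of modular lattices lets me pass to a \emph{covering} diamond $a \prec t_1,\dots,t_k \prec b$ with $k \ge 3$, so that $[a,b]$ is a length-two interval whose atoms equal its coatoms; list them left to right as they appear in the planar diagram. The heart of the argument is then a Jordan-curve observation: an inner atom $t_i$ (with $2 \le i \le k-1$) is enclosed by the closed curve formed by the two paths $a\,t_{i-1}\,b$ and $a\,t_{i+1}\,b$, so an upward edge leaving $t_i$ must remain inside this region and can only exit through $b$, and dually for downward edges through $a$. Hence each inner $t_i$ is doubly irreducible. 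Provided $(a,b)$ is a genuine decoration site, i.e.\ $\uc{a}=\lc{b}$, these inner atoms are trinkets, contradicting that $\Rack L$ is a rack; so $\Rack L$ has no $M_3$ and is distributive.

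I expect the planar topology to be the main obstacle, specifically the step that upgrades the covering diamond to a decoration site: one must rule out, using the embedding, that $a$, $b$, or an inner atom carries a further cover that would make $(a,b)$ fail $\uc{a}=\lc{b}$, by showing any such extra cover forces an edge crossing. A cleaner route, once $\Rack L$ is identified with the frame $\Frame L$ as discussed above, is simply to invoke Gr\"atzer and Quackenbush's result that $\Frame L$ is planar and distributive, which then yields the theorem directly.
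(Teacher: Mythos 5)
The paper gives no proof of this statement at all: it is stated as an attributed citation to Gr\"atzer and Quackenbush, resting on the identification $\Frame L \cong \Rack L$ that Section~2 asserts (also without proof) for planar $L$. So your closing fallback --- ``just invoke their result'' --- is precisely the paper's route, while the body of your proposal is a genuinely different, self-contained reconstruction. Your planarity half is complete and correct: deleting a degree-two vertex with its two edges from an upward planar diagram stays upward planar, and your observation that each site keeps at least two elements (so no new cover edge $a \prec b$ is forced) is exactly the point that needs checking. Your distributivity half correctly reduces, via modularity of $\Rack L$ (Theorem~\ref{thm:mod}) and the standard shrinking of a diamond to a covering diamond, to a covering $M_k$, and you correctly identify the one real gap: showing that its corners satisfy $\uc{a}=\lc{b}$, since otherwise the inner elements are not trinkets in the paper's sense and rack-ness gives no contradiction.

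That gap is genuine but fillable without citing Gr\"atzer--Quackenbush. If $a$ had an upper cover $u \not\le b$, then $u \wedge b = a$, and by modularity the ten elements $a$, $t_1,t_2,t_3$, $b$, $u$, $t_1\vee u, t_2\vee u, t_3\vee u$, $b \vee u$ form a \emph{cover-preserving} sublattice isomorphic to $M_3 \times C_2$ (all seventeen required cover relations follow from the transposed-interval isomorphisms $[a,b] \cong [u, u\vee b]$ and $[a,u] \cong [x, x \vee u]$). A cover-preserving sublattice of a planar lattice has its Hasse diagram as a subgraph of a planar graph, but the Hasse diagram of $M_3 \times C_2$ is bipartite with $10$ vertices and $17 > 2\cdot 10 - 4$ edges, hence nonplanar --- contradiction; dually for extra lower covers of $b$. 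Together with your Jordan-curve argument (which is the other lemma Gr\"atzer--Quackenbush prove, and does need to be carried out carefully), this completes the direct proof. One caution about your ``cleaner route'': it is less direct than it looks, because their theorem concerns $\Frame L$, so transferring it to $\Rack L$ requires proving $\Frame L \cong \Rack L$ --- and that proof consists of essentially the same two lemmas (the decoration-site upgrade and the double irreducibility of inner elements) you were hoping to avoid. The paper leaves that identification implicit as well.
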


\begin{corollary}
  Every planar modular rack is distributive.
  \label{cor:planar}
\end{corollary}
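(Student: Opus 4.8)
The plan is to derive this directly from Theorem~\ref{thm:planar}, exploiting the fact that a rack is a fixed point of the $\Rack$ operation. Let $R$ be a planar modular rack. The first observation is that $R$ is, in particular, a planar modular lattice, so Theorem~\ref{thm:planar} applies verbatim with $L = R$: it tells us that $\Rack R$ is planar and distributive.

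The second and only substantive step is to identify $\Rack R$ with $R$ itself. Here I would invoke Definition~\ref{def:rack}: a rack is precisely a lattice containing no trinkets, and $\Rack R$ is obtained from $R$ by removing its trinkets. Since $R$ has none to remove, the operation changes nothing and $\Rack R = R$. (Equivalently, this is the idempotence of $\Rack$ noted immediately after Lemma~\ref{lemma:racksites}.) Combining the two steps, $R = \Rack R$ is distributive, which is the claim.

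I do not expect a genuine obstacle here, since the corollary is essentially a specialization of the preceding theorem to its own output. The only point that needs to be stated carefully is the identity $\Rack R = R$, i.e.\ that racks are exactly the fixed points of $\Rack$; once that is in place the argument is a single line. One could alternatively phrase the whole proof as: every planar modular lattice has a distributive rack, and a rack equals its own rack, hence every planar modular rack is distributive.
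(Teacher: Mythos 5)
Your proof is correct and is exactly the intended derivation: the paper states the corollary without proof because it follows from Theorem~\ref{thm:planar} in precisely the way you describe, applying the theorem with $L=R$ and noting that a rack, having no trinkets, satisfies $\Rack R = R$. Your careful identification of racks as fixed points of $\Rack$ is the one step worth stating, and you state it.
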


In contrast to Theorem~\ref{thm:dist} and Corollary~\ref{cor:planar},
a~modular rack need not be distributive, and a~distributive rack need
not be planar.  Some examples can be found in Figure~\ref{fig:smalls},
where all vertically indecomposable modular racks of $1$~to~$13$
elements are displayed.  Nondistributive racks include 10.0 (no
decoration sites) and 12.22 ($B_3^+$ in~\cite{gratzer2010}; has five
decoration sites).  Distributive nonplanar racks include 8.0 ($B_3$,
no decoration sites) and 10.1 (has one decoration site).

\begin{figure}[p]
  \begin{center}
    \includegraphics[height=0.9\textheight]{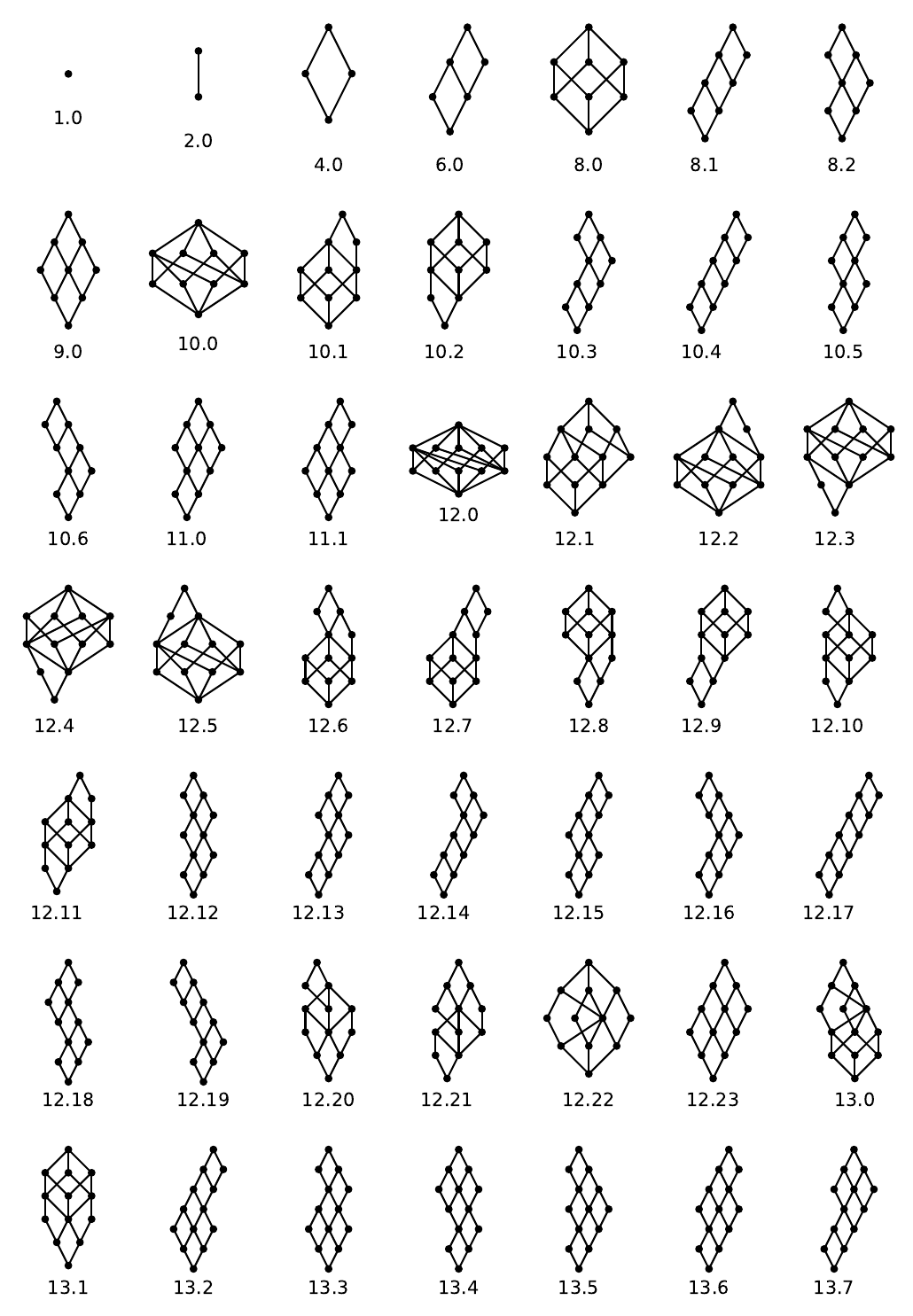}
  \end{center}
  \caption{All modular vi-racks of $1$ to $13$ elements, labeled with
    the number of elements and an ordinal index.}
  \label{fig:smalls}
\end{figure}

Planar distributive lattices are well understood.  Every planar
distributive lattice can be obtained from the direct product of two
finite chains by removing two arbitrarily shaped ``corners'' from left
and right~\cite{gratzer2007}.  With this characterization they are
easily counted: OEIS gives a~simple recurrence and counts them up~to
$n=1000$~\cite[A343161]{oeis}.  In the nonplanar, nondistributive case
we~have no~such nice characterization, and computational methods are
needed to obtain all racks.

\section{Decoration under symmetry}
\label{sec:decoration}

\begin{figure}[b]
  \begin{center}
    \raisebox{0.5cm}{\includegraphics[width=0.30\textwidth]{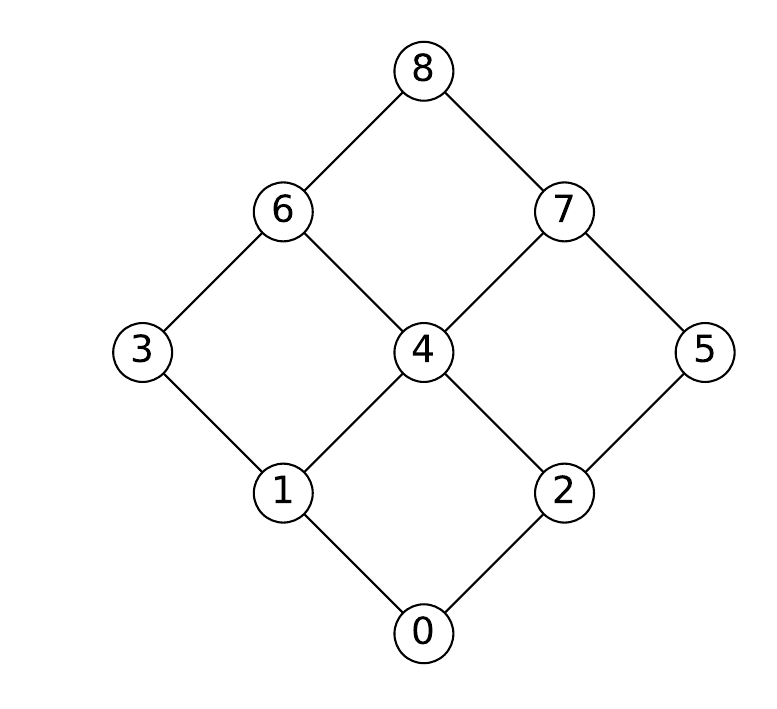}}
    \hspace{0.05\textwidth}
    \includegraphics[width=0.48\textwidth]{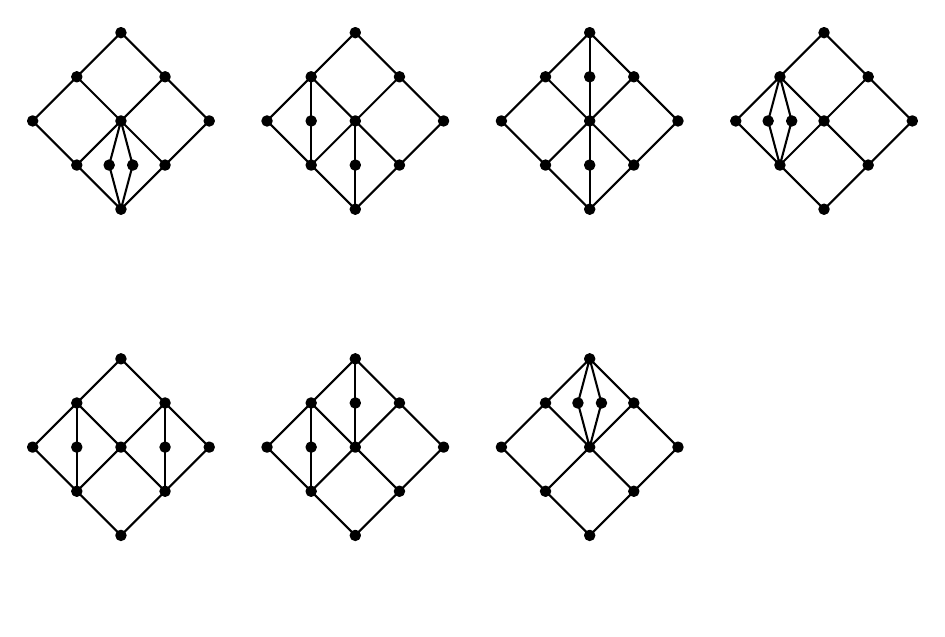}
  \end{center}
  \caption{A modular rack and its seven nonisomorphic decorations with
    two trinkets.}
  \label{fig:mirror}
\end{figure}

From the previous section we know that every modular lattice $L$ can
be created by \emph{decorating} a modular rack with trinkets.  Let us
now investigate two questions: Given a~rack~$R$ and an integer~$m$,
how many nonisomorphic modular lattices can be created by placing
$m$~trinkets, and how can we actually construct them?

These tasks are complicated by the symmetries of the rack.  Consider
the rack in Figure~\ref{fig:mirror}.  It has four decoration sites:
$(0,4)$, $(1,6)$, $(2,7)$ and $(4,8)$.  We can treat them as boxes
where indistinguishable balls, or trinkets, are distributed.  With
(say) two trinkets, the number of ways is $\binom{2+4-1}{2} = 10$ by
the stars-and-bars method, but only $7$ of the resulting lattices are
nonisomorphic.

In order to count the nonisomorphic results, we employ P\'olya
counting as follows.  Suppose that we are decorating a~rack that has
$k$ decoration sites, and their symmetry group is~$G$.  Let $Z$ be the
cycle index of~$G$, that is, the polynomial
\[
  Z(t_1,\dots,t_k) = \frac{1}{\lvert G \rvert} \sum_{g \in G}
  t_1^{c_1(g)} t_2^{c_2(g)} \cdots t_k^{c_k(g)},
\]
where $c_i(g)$ is the number of cycles of length~$i$ in
permutation~$g$.  Define the figure-counting series
\[
  A(x) = 1+x+x^2+x^3+\ldots = 1/(1-x),
\]
indicating that each decoration site can be allocated any nonnegative
integer number of trinkets.  Now by the Cycle Index
Theorem~\cite[p.~77]{cameron2015}, the series
\[
  B(x) = Z(A(x), A(x^2), \dots, A(x^k))
\]
is the so-called function-counting series: the coefficient of its
$x^m$ term is the number of nonisomorphic ways to distribute a~total
of $m$ trinkets to the $k$ decoration sites.

\begin{example}
  The rack in Figure~\ref{fig:mirror} has mirror symmetry.  Let us
  refer to the decoration sites by their lower corners.  $G$~has two
  elements, the identity $(0)(1)(2)(4)$ and the mirroring
  $(0)(1 \; 2)(4)$, so $Z = \frac{1}{2}t_1^4 + \frac{1}{2}t_1^2t_2$.
  We obtain
  \[
    B(x) = 1 + 3x + 7x^2 + 13x^3 + 22x^4 + 34x^5 + \dots,
  \]
  which tells us that there is $1$ decoration with zero trinkets, $3$
  nonisomorphic decorations with one trinket, $7$ with two trinkets,
  and so on.
\end{example}

\begin{example}
  The rack in Figure~\ref{fig:complicated} is more complicated.  It
  has eleven decoration sites, and their symmetry group is isomorphic
  to the dihedral group $D_4$.  The~group fixes three sites and moves
  eight sites in a~nontrivial way.  It~would be tedious to work out
  the symmetry and count the decorations manually.  But in the
  supplementary SageMath code we have the function
  \verb@count_decorations@ that implements the method described above.
  With this function we find (in a~few milliseconds) that if we were
  to decorate this rack with, say, $20$~trinkets, we would obtain
  exactly $5\;371\;900$ nonisomorphic modular lattices.
\end{example}

\begin{figure}[b]
  \begin{center}
    \includegraphics[width=0.44\textwidth]{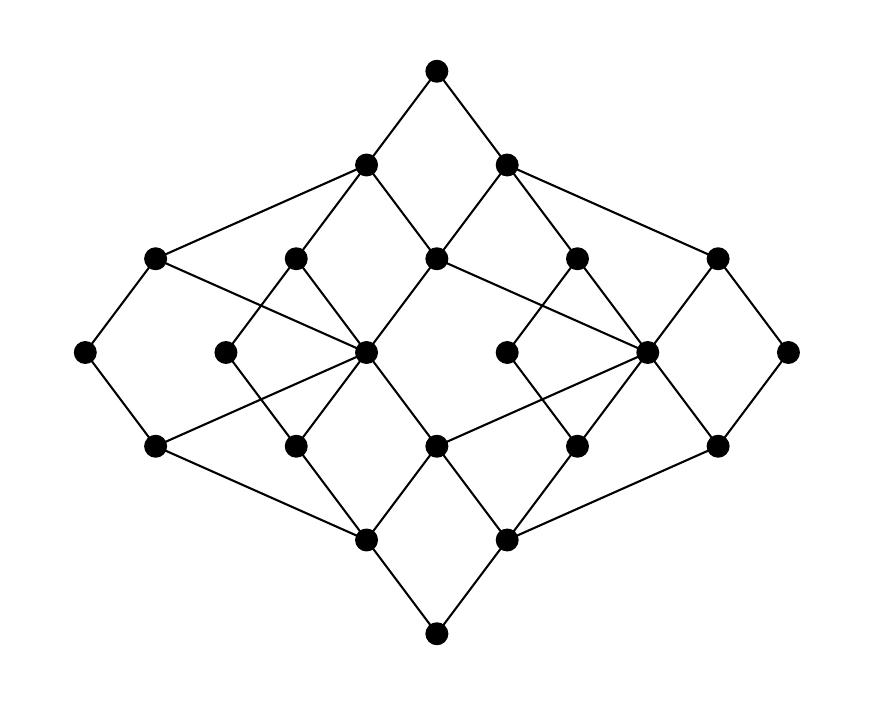}
  \end{center}
  \caption{A rack with eleven decoration sites.}
  \label{fig:complicated}
\end{figure}

If we require the actual lattices (and not just their count), then we
need a~different tool.  For this we use
\verb@IntegerVectorsModPermutationGroup@ developed by
Borie~\cite{borie2013} and incorporated into the SageMath
Combinatorics library.  Given the symmetry group of a~rack's
decoration sites, and a~number of trinkets $m$, this tool lists the
different ways of distributing $m$ balls to boxes under that symmetry.
It is then straightforward to create the lattices by adding those
numbers of trinkets to the sites.  This is implemented in our function
\verb@list_decorations@.  Explicit listing is of course much slower
than counting.

\begin{example}
  With the rack of Figure~\ref{fig:mirror} (left), the possible
  allocations of two trinkets to the four decoration sites, subject to
  the symmetry, are $(2, 0, 0, 0)$, $(1, 1, 0, 0)$, $(1, 0, 0, 1)$,
  $(0, 2, 0, 0)$, $(0, 1, 1, 0)$, $(0, 1, 0, 1)$, and $(0, 0, 0, 2)$.
  By adding these numbers of trinkets, we obtain the modular lattices
  in Figure~\ref{fig:mirror} (right).
\end{example}

\section{Generating and classifying the racks}
\label{sec:computation}

Computations were performed in phases.  First, all unlabeled modular
vi-racks of $n \le 40$ elements were listed using essentially the same
C++ program that has been used in earlier
works~\cite{kohonen2019generating,kohonen2022}.  Conditions were added
to the program so that lattices with trinkets are not generated.  This
phase took $70.4$ cpu-core days in total, running on a~variety of AMD
EPYC 7713 and Intel Xeon processors with nominal clock rates mostly
between $2.0$ and $2.5$~GHz.  The parts for $n=36,37,38,39,40$ took
$1.6$, $3.6$, $8.1$, $17.7$ and $38.5$ cpu-core days, respectively,
showing roughly $2.2$-fold growth when $n$ increases by one.

The speed benefits of our method were tested as follows.  All
unlabeled modular vi-lattices of $30$~elements were explicitly listed
using the same C++ program.  This took $27.7$ hours; in comparison,
listing only the 30-element modular vi-racks took $0.36$~hours.  The
time savings grow with~$n$, and we estimate, very roughly, that
an~explicit listing for $n=40$ would have taken $60$ cpu-core years.

In the second phase the modular racks were analyzed and postprocessed.
For each modular rack, the decoration sites were located, and the
cycle index of their symmetry group was computed.  A~tally of racks
was kept for each different cycle index encountered.  The racks were
also converted to a~canonical form for ease of later use, and stored
in XZ-compressed dig6 format~\cite{xz}.  This phase took 237.6
cpu-core days, somewhat more than the first phase, but this can be
accounted to the relatively slow SageMath code that was used.
A~faster (e.g.\ C++) program for this phase could be written if
necessary.

In the third phase all unlabeled modular vi-lattices of $n \le 40$
elements were counted.  This is a~matter of seconds, because all that
remains to do in this phase is to combine the counts of racks, from
each cycle index, with the numbers of decorations per rack.  More
precisely, we have
\[
  \lvert \Modvi_n \rvert = \sum_{k=1}^n \; \sum_{Z \in {\mathcal Z}(k)}
  R(k,Z) \cdot D(Z, n-k),
\]
where ${\mathcal Z}(k)$ is the set of all different cycle indices in
$k$-element modular vi-racks, $R(k,Z)$ is the number of unlabeled
$k$-element modular vi-racks whose decoration sites have a~symmetry
group with cycle index $Z$, and $D(Z, n-k)$ is the number of
decorations of each such rack with $n-k$ trinkets.  Here $R(k,Z)$
comes from our tallying in the second phase, and $D(Z,n-k)$ is
calculated with the method described in Section~\ref{sec:decoration}.
The summation is fast, because it does not involve very many terms:
even $\lvert {\mathcal Z}(40) \rvert$ is only $1614$ (see
Table~\ref{tbl:counts}).  Although there are hundreds of millions of
different racks, they can be grouped into a~relatively few types by
their decoration symmetry.

Finally the numbers of unlabeled modular lattices of $n$ elements
($\Mod_n$) were calculated using the well-known
recurrence~\cite{jipsen2015}
\[
  \lvert \Mod_n \rvert =
  \sum_{j=2}^{n} \lvert \Modvi_j \rvert \cdot \lvert \Mod_{n-j+1} \rvert,
\]
which counts the ways of composing the vi-lattices vertically.


Several consistency checks were performed in order to increase the
reliability of the computational results.  First, the racks were
generated twice on different computer systems.  The output files were
verified to be byte-by-byte identical by comparing their MD5
checksums.

Secondly we counted the occurrences of each rank sequence in the rack
listings, and verified that all those counts are consistent with
duality.  For example, among all unlabeled modular vi-racks of $40$
elements, exactly $4\;265$ have the rank sequence $1,3,5,6,7,7,6,4,1$,
and another $4\;265$ (their duals) have the reverse of that, namely
$1,4,6,7,7,6,5,3,1$.

Thirdly the numbers of modular vi-lattices were verified against
previous results, which went to $n=30$~\cite{kohonen2019generating}
and to $n=35$~\cite{kohonen2022}.  We must note that those two
previous countings are based on the same underlying lattice-listing
C++ program that was also used here.  However, the combinatorial
methods are quite different, so we are reaching the same numbers by
three different methods.

Fourthly, a~listing of $n$-element modular vi-racks must contain all
distributive vi-lattices of that size.  We~scanned the rack lists for
distributive lattices of up to $40$ elements, and verified that the
counts match earlier results~\cite{erne2002,kohonen2022}.

Finally, the rack listings and their decorations were comprehensively
compared, lattice by lattice, against the explicit listings of modular
vi-lattices published earlier~\cite{kohonen2019generating,eudat}.
This test went both ways.  We scanned the explicit listings for racks,
and verified that they are the same racks as those listed in the
present work.  Also, we listed all decorations from our racks, and
verified that this exactly re-creates the modular vi-lattices that
were listed earlier, up to isomorphism.  This test was computationally
intensive, and was performed only up to 19 elements.

\section{Numerical results}
\label{sec:numerical}

The counting results are displayed in Table~\ref{tbl:counts}.  The
second column contains $\lvert {\mathcal Z}(n) \rvert$, the number of
different cycle indices of the decoration symmetries in $n$-element
modular vi-racks.  The last three columns contain the numbers of
unlabeled modular vi-racks, modular vi-lattices, and modular lattices,
respectively.  The last two columns were previously known up to $n=35$
from a~different method of counting~\cite{kohonen2022}.

Figure~\ref{fig:counts} illustrates how the numbers of unlabeled
lattices in different families depend on the number of elements.
The~data are from Table~\ref{tbl:counts} and the OEIS entries A072361
and A345734~\cite{oeis}.  Although precise asymptotics are not known,
empirically the growth rate of modular vi-racks is about
$\Theta(1.9^n)$, and it is closer to distributive vi-lattices than to
modular vi-lattices.  We~can see that much of the apparent multitude
of modular vi-lattices is just decoration.

\begin{table}[p]
  \caption{Numbers of unlabeled modular lattices of $n$~elements.}
  \begin{center}
  \begin{tabular}{rrrrr}
    \toprule
    $n$ & Cycle indices & Mod. vi-racks & Mod. vi-lattices & Mod. lattices \\
        &               &               & A342132          & A006981~\cite{oeis} \\
    \midrule
   1 &    1 &            1 &               1 &                  1 \\
   2 &    1 &            1 &               1 &                  1 \\
   3 &    0 &            0 &               0 &                  1 \\
   4 &    1 &            1 &               1 &                  2 \\
   5 &    0 &            0 &               1 &                  4 \\
   6 &    1 &            1 &               2 &                  8 \\
   7 &    0 &            0 &               3 &                 16 \\
   8 &    2 &            3 &               7 &                 34 \\
   9 &    1 &            1 &              12 &                 72 \\
  10 &    3 &            7 &              28 &                157 \\
  11 &    1 &            2 &              54 &                343 \\
  12 &    7 &           24 &             127 &                766 \\
  13 &    2 &            8 &             266 &               1718 \\
  14 &    8 &           70 &             614 &               3899 \\
  15 &   13 &           44 &            1356 &               8898 \\
  16 &   12 &          215 &            3134 &              20475 \\
  17 &   16 &          173 &            7091 &              47321 \\
  18 &   23 &          711 &           16482 &             110024 \\
  19 &   27 &          657 &           37929 &             256791 \\
  20 &   33 &         2367 &           88622 &             601991 \\
  21 &   42 &         2561 &          206295 &            1415768 \\
  22 &   57 &         7989 &          484445 &            3340847 \\
  23 &   60 &         9745 &         1136897 &            7904700 \\
  24 &   80 &        27540 &         2682451 &           18752943 \\
  25 &   98 &        36744 &         6333249 &           44588803 \\
  26 &  115 &        95975 &        15005945 &          106247120 \\
  27 &  140 &       137895 &        35595805 &          253644319 \\
  28 &  179 &       337911 &        84649515 &          606603025 \\
  29 &  212 &       514821 &       201560350 &         1453029516 \\
  30 &  251 &      1200282 &       480845007 &         3485707007 \\
  31 &  318 &      1915896 &      1148537092 &         8373273835 \\
  32 &  375 &      4291336 &      2747477575 &        20139498217 \\
  33 &  440 &      7113503 &      6579923491 &        48496079939 \\
  34 &  549 &     15430316 &     15777658535 &       116905715114 \\
  35 &  655 &     26356273 &     37871501929 &       282098869730 \\
  36 &  772 &     55742330 &     90998884153 &       681357605302 \\
  37 &  944 &     97509982 &    218856768070 &      1647135247659 \\
  38 & 1133 &    202116488 &    526836817969 &      3985106742170 \\
  39 & 1319 &    360362439 &   1269255959032 &      9649048527989 \\
  40 & 1614 &    735089580 &   3060315929993 &     23379906035595 \\
    \bottomrule
  \end{tabular}
  \end{center}
  \label{tbl:counts}
\end{table}

\begin{figure}
  \begin{center}
    \includegraphics[width=0.8\textwidth]{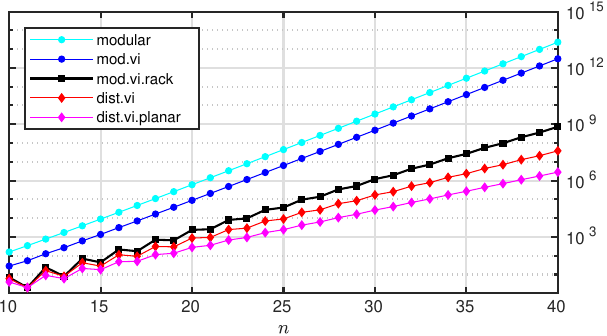}
  \end{center}
  \caption{Numbers of modular lattices, modular vi-lattices, modular
    vi-racks, distributive vi-lattices, and planar distributive
    vi-lattices of $n$ elements (all up to isomorphism).}
  \label{fig:counts}
\end{figure}

\section{SageMath implementation}
\label{sec:sage}

A library of SageMath code was developed to support the use of the
rack listings~\cite{sagecode}.  An~overview of its capabilities is
given here.

The library contains utility functions for reading lattices from text
files, adding and removing trinkets, finding the decoration sites of
a~lattice, and so on.  But~the central part of the library is the
combinatorial core of our approach: given a~rack and a~number of
trinkets, functions \verb@count_decorations@ and
\verb@list_decorations@ can count and list all the ensuing
nonisomorphic decorations.  Counting is much faster than listing,
because it is done through P\'olya counting as~described in
Section~\ref{sec:decoration}.  Instead of listing all decorations, one
can also ask for a~specific decoration by its ordinal index.  This is
useful, for example, if one wants to sample uniformly at~random from
a~large set of decorations.

\begin{figure}[t]
  \begin{center}
    \includegraphics[height=0.48\textwidth]{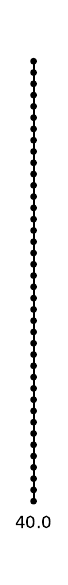}
    \hspace{0.02\textwidth}
    \includegraphics[height=0.48\textwidth]{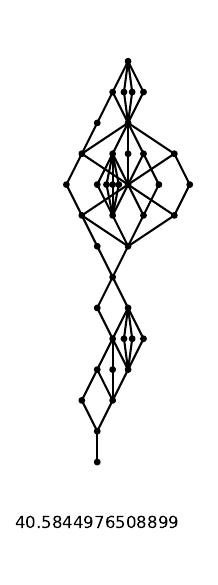}
    \hspace{0.01\textwidth}
    \includegraphics[height=0.48\textwidth]{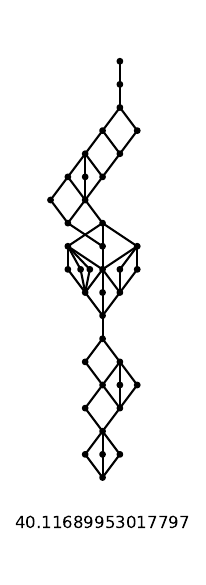}
    \hspace{0.01\textwidth}
    \includegraphics[height=0.48\textwidth]{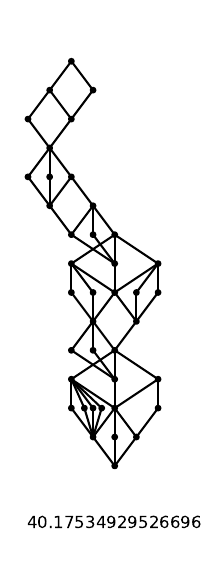}
    \hspace{0.01\textwidth}
    \includegraphics[height=0.48\textwidth]{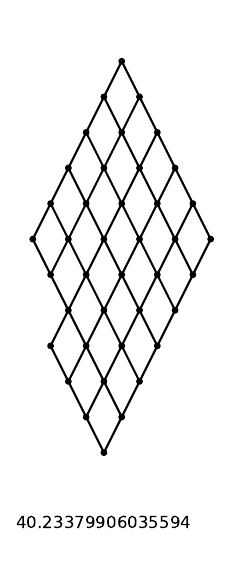}
  \end{center}
  \caption{An evenly spaced sample of five 40-element modular lattices,
    from the first to the last in our virtual listing.}
  \label{fig:sample}
\end{figure}

Both the file access and the combinatorial core are encapsulated into
classes belonging to the category \verb@FiniteEnumeratedSets@.  From
the outside, such a~class appears as a~virtual list whose members can
be \emph{iterated} (accessed sequentially) and \emph{unranked}
(accessed by an integer index).  It~is up to the implementation how
the members are produced when asked for.  We~have nested levels of
wrappers.  The lowest-level wrapper encapsulates a~rack listing that
resides in an~XZ-compressed text file.  The~unranker seeks to the
correct position and reads one lattice from there.  Further wrappers
encapsulate decoration and vertical composition.  The unrankers obtain
the appropriate racks from the file wrapper, decorate with trinkets,
and compose vertically as needed.  As~a result we have easily
accessible virtual lists of modular vi-lattices and modular lattices
of $n \le 40$ elements.

\begin{example}
  The following code asks for a nonrandom, approximately evenly spaced
  sample of five modular lattices of $40$~elements, ranging from the
  first one to the last one in the virtual listing, according to an
  intrinsic order.  The sample is promptly returned without ever
  having to construct $23$~trillion lattices.
\begin{verbatim}
    sage: M = ModularLattices(40)
    sage: card = M.cardinality(); print(card)
    23379906035595
    sage: LL = list(M[round((card-1)*i/4)] for i in [0,1,2,3,4]]
\end{verbatim}
  The first line takes five seconds on a~laptop computer, as~it opens
  the files and sets things up for fast retrieval.  The next lines
  take less than a~second.  The sample is displayed in
  Figure~\ref{fig:sample}.  Note that in our virtual listing, racks
  are ordered by the number of decoration sites, and vertically
  decomposable lattices are ordered by the size of the lowest
  vertically indecomposable component.  So~it is not a~coincidence
  that the first lattice in the listing is the chain, and the last one
  is a~planar distributive lattice with many (empty) decoration sites.
\end{example}

We note in passing that for automatically generated modular lattices,
SageMath's \verb@plot@ often produces diagrams that contain
unnecessarily many crossing edges.  Our \verb@lattice_plot@ attempts
to draw somewhat prettier lattice diagrams.  It~recognizes trinkets
and hangs them out in an~aptly trinketlike fashion.  All lattice
diagrams in this paper, including Figure~\ref{fig:sample}, were
automatically produced with this function.

\section{Related work}
\label{sec:related}

This work was initially inspired by the simple observation that
modular lattices contain lots of doubly irreducible elements.  Several
previous works share the idea of adding single elements or other
simple features to a~lattice, sometimes using enumerative
combinatorics to count the ways of doing that.

As already mentioned, Gr\"atzer and Quackenbush described a reduction
similar to ours, where internal doubly irreducible elements are
removed from a~planar modular lattice~\cite{gratzer2010}.  The present
work removes the restriction to planarity, so that we can represent
all modular lattices.

Jipsen and Lawless proved an~$\Omega(2^n)$ lower bound for the number
of unlabeled modular lattices using a recursive construction, where
each step either extends the lattice vertically, or adds a~doubly
irreducible element~\cite{jipsen2015}.

Bhavale and Waphare studied dismantlable lattices whose reducible
elements are comparable to each other; such lattices consist of
a~single main chain with attached side chains.  The possible
placements of the side chains were counted using binomial
coefficients~\cite{bhavale2020}.

The present author studied rank-three graded lattices (without
modularity) and reduced them by removing all doubly irreducible atoms;
those atoms were then treated as indistinguishable balls to be placed
into partially distinguishable boxes, leading to P\'olya
counting~\cite{kohonen2019counting}.

\section{Concluding remarks}
\label{sec:conclusion}

We have seen that the removal of some doubly irreducible elements can
lead to big savings in computation and storage.  Both theoretical and
practical tools were needed to make it happen.  Structural theorems
provide the foundation for such work, but algorithms and computations
bring the theorems to~life.  The operation considered here, where
racks are decorated with trinkets, is at the same time a special case
of one-point extension (for lattices in general), and a generalization
of Gr\"atzer and Quackenbush's operation of adding eyes (for planar
modular lattices).  The level of specialization has been chosen so as
to be amenable to efficient computation (counting and access to
individual lattices).

From Figure~\ref{fig:smalls} one may observe that many of our racks
could be composed from smaller components by gluing constructions.
For example, lattice 10.1 is the Dilworth gluing of a $B_3$ with a
$B_2$ over a two-element lattice, and lattice 9.0 could be represented
as an $S$-glued system of four copies of $B_2$ (cf. Figures~1 and~2 of
Day and Freese~\cite{day1990} for a similar gluing of copies of
$B_3$).  Conceivably, such decompositions could be used to further
simplify and reduce our exhaustive listings of lattices.  The
challenge then is to employ gluing in a computationally efficient
manner, so that one can still effectively access the individual
lattices at will.  This is an interesting topic for further study.

This work provides a~virtual \emph{listing} of modular lattices.
For~some uses one might want a~\emph{database} that would allow
efficient queries according to various criteria, such as number of
levels, numbers of elements at specific levels, whether the lattice is
slim, complemented, planar, and so on.  This could be similar to what
the online House of Graphs provides for graphs~\cite{coolsaet2023}.
An~intriguing prospect is a~virtual database, one that would represent
a~large collection in terms of a~smaller explicit collection,
similarly to what was done here, and would still support efficient
queries.  For this to work, one would have to take into account how
some properties are preserved and others affected by whatever
structural reductions one is employing.  Making it click together
might involve some interesting combinatorics.

\subsection*{Acknowledgments}

The author wishes to thank the anonymous referees for their
illuminating comments.  Thanks are also due to the numerous authors of
the computational tools that were essential for this work.
SageMath~\cite{sagemath} and its component libraries facilitate
computation with graphs, symmetry groups, cycle indices, and symbolic
power series.  The underlying rack-listing program relies on
Nauty~\cite{nauty} for isomorph avoidance.  XZ~\cite{xz} compresses
the lattice files efficiently while enabling fast random access to
individual lattices.  Computational resources were generously provided
by CSC --- IT Center for Science and by the Aalto Science-IT project.



\begin{thebibliography}{99}
  
\bibitem{bhavale2020}
A.\,N.~Bhavale and B.\,N. Waphare,
Basic retracts and counting of lattices,
{\em Australas. J. Combin.} {\bf 78} (2020), 73--99.

\bibitem{borie2013}
N.~Borie,
Generating tuples of integers modulo the action of a permutation
group and applications,
 in DMTCS Proceedings vol. AS, 25th International Conference on Formal Power Series and Algebraic Combinatorics (FPSAC 2013),
pp.~767--778.

\bibitem{cameron2015}
P.\,J. Cameron,
Combinatorics 2: Structure, symmetry and polynomials,
lecture notes,
\url{https://cameroncounts.files.wordpress.com/2016/04/acnotes2.pdf},
2015.

\bibitem{coolsaet2023}
K.~Coolsaet, S.~D’hondt and J.~Goedgebeur,
House of Graphs 2.0: a database of interesting graphs and more,
{\em Discrete Appl. Math.} {\bf 325} (2023), 97--107.

\bibitem{day1990}
A.~Day and R.~Freese,
The role of gluing constructions in modular lattice theory,
in K.\,P. Bogart, R.~Freese and J.\,P.\,S. Kung (editors), {\em The
  Dilworth Theorems: Selected Papers of Robert P. Dilworth},
  Birkh{\"a}user, 1990, pp.~251--260.

\bibitem{erne2002}
M.~Ern{\'e}, J.~Heitzig and J.~Reinhold,
On the number of distributive lattices,
{\em Electron. J. Combin.} {\bf 9} (2002), article \#R24, 23~pp.

\bibitem{gratzer2011}
G.~Gr{\"a}tzer,
{\em Lattice Theory: Foundation},
Birkh{\"a}user, 2011.

\bibitem{gratzer2007}
G.~Gr\"atzer and E.~Knapp,
Notes on planar semimodular lattices I, construction,
{\em Acta Sci. Math. (Szeged)} {\bf 73} (2007), 445--462.

\bibitem{gratzer2010}
G.~Gr\"atzer and R.\,W. Quackenbush,
The variety generated by planar modular lattices,
{\em Algebra Universalis} {\bf 63} (2010), 187--201.

\bibitem{herrmann1973}
Ch. Herrmann,
$S$-verklebte Summen von Verb{\"a}nden,
{\em Math. Z.} {\bf 130} (1973), 255--274.

\bibitem{jipsen2015}
P.~Jipsen and N.~Lawless,
Generating all finite modular lattices of a given size,
{\em Algebra Universalis} {\bf 74} (2015), 253--264.

\bibitem{kohonen2019counting}
J.~Kohonen,
Counting graded lattices of rank 3 that have few coatoms,
{\em Internat. J. Algebra Comput.} {\bf 29} (2019), 663--680.

\bibitem{kohonen2019generating}
J.~Kohonen,
Generating modular lattices of up to 30 elements,
{\em Order} {\bf 36} (2019), 423--435.

\bibitem{kohonen2022}
J.~Kohonen,
Cartesian lattice counting by the vertical 2-sum,
{\em Order} {\bf 39} (2022), 113--141.

\bibitem{eudat}
J.~Kohonen,
Lists of finite lattices (modular, semimodular, graded and geometric),
data files at EUDAT B2SHARE,
\url{http://doi.org/10.23728/b2share.dbb096da4e364b5e9e37b982431f41de},
2018.

\bibitem{sagecode}
J.~Kohonen,
Modular racks,
SageMath library,
\url{https://bitbucket.org/jkohonen/modular-racks},
2023.

\bibitem{fairdata}
J.~Kohonen,
Racks of vertically indecomposable modular lattices (version 1),
data files at Fairdata.fi,
\url{https://doi.org/10.23729/1c446a98-23f7-49a3-a040-f75fbd111c0d},
2023.

\bibitem{nauty}
B.~D. McKay and A.~Piperno,
{\em Nauty and Traces User’s Guide (version 2.8.6)},
2022.

\bibitem{oeis}
The OEIS Foundation,
The On-line Encyclopedia of Integer Sequences,
\url{https://oeis.org}.

\bibitem{sagemath}
The Sage~developers,
SageMath, the Sage mathematics software system.

\bibitem{xz}
The Tukaani project,
XZ utils,
\url{https://tukaani.org/xz/}.

\end{thebibliography}
\end{document}